\definecolor{gray}{rgb}{0.3,0.3,0.3}
\theoremstyle{plain}
\newtheorem{theorem}{Theorem}[section]
\newtheorem{lemma}[theorem]{Lemma}
\newtheorem{proposition}[theorem]{Proposition}
\newtheorem{corollary}[theorem]{Corollary}
\theoremstyle{definition}
\newtheorem{definition}[theorem]{Definition}
\newtheorem{remark}[theorem]{Remark}
\newcommand{\nc}{\newcommand}
\newcommand{\rc}{\renewcommand}
\nc{\mc}{\mathcal}
\rc{\t}{\text}
\nc{\loccit}{\emph{loc. cit. }}
\nc\pf{\noindent Proof: }
\nc{\Hom}{\t{Hom}}
\nc{\tot}{\t{tot}}
\nc{\dual}{^{\vee}}
\nc{\op}{^{\t{op}}}
\nc{\coh}{\t{coh}}
\nc{\iso}{\cong}
\rc{\d}{\operatorname{d}}
\nc{\Id}{\operatorname{Id}}
\nc{\dgmod}{\operatorname{dg-mod}}
\nc{\compose}{\circ}
\nc{\sheafsym}{\mathcal{S}\t{ym}}
\nc{\rend}{\operatorname{REnd}}
\nc{\rhom}{\operatorname{RHom}}
\nc{\sheafrend}{\mathcal{R}\mc{E}\t{\emph{nd}}}
\nc{\sheafrhom}{\mathcal{R}\mc{H}\t{\emph{om}}}
\nc{\sheafhom}{\mathcal{H}\t{om}}
\nc{\exterior}{{\textstyle\bigwedge\nolimits}}
\nc{\ex}{\exterior}
\nc{\cok}{\operatorname{Coker}}
\rc{\ker}{\operatorname{Ker}}
\nc{\Lotimes}{{\overset{L}{\otimes}}}
\rc{\to}{\rightarrow}
\nc{\ot}{\leftarrow}
\nc\xto[1]{\xrightarrow{#1}}
\nc{\too}{\longrightarrow}
\nc{\oot}{\longleftarrow}
\nc{\into}{\hookrightarrow}
\nc{\mapsinto}{\hookrightarrow}
\nc{\D}{\operatorname{D}}
\nc{\Dsg}{\D_{{sg}}}
\nc{\Db}{\D^{{b}}}
\nc{\Dbgr}{\Db_{{gr}}}
\nc{\Dgr}{\D_{{gr}}}
\nc{\Dsggr}{\Dsg^{{gr}}}
\nc{\Cgr}{\operatorname{C}_{{gr}}}
\nc{\cCgr}{\cC_{gr}}
\nc{\cDgr}{\cD_{gr}}
\nc{\cDsggr}{\cD_{gr}^{sg}}
\nc{\cDbgr}{\cD_{gr}^{b}}
\nc{\cDsg}{\cD_{sg}}
\nc{\cDb}{\cD^{b}}
\rc{\H}{\operatorname{H}}
\nc{\cA}{\mc{A}}\nc{\cB}{\mc{B}}\nc{\cC}{\mc{C}}\nc{\cD}{\mc{D}}\nc{\cE}{\mc{E}}\nc{\cF}{\mc{F}}\nc{\cG}{\mc{G}}\nc{\cH}{\mc{H}}\nc{\cI}{\mc{I}}\nc{\cJ}{\mc{J}}\nc{\cK}{\mc{K}}\nc{\cL}{\mc{L}}\nc{\cM}{\mc{M}}\nc{\cN}{\mc{N}}\nc{\cO}{\mc{O}}\nc{\cP}{\mc{P}}\nc{\cQ}{\mc{Q}}\nc{\cR}{\mc{R}}\nc{\cS}{\mc{S}}\nc{\cT}{\mc{T}}\nc{\cU}{\mc{U}}\nc{\cV}{\mc{V}}\nc{\cW}{\mc{W}}\nc{\cX}{\mc{X}}\nc{\cY}{\mc{Y}}\nc{\cZ}{\mc{Z}}
\nc{\PP}{\mathbb{P}}
\nc{\CC}{\mathbb{C}}
\nc{\ZZ}{\mathbb{Z}}
\nc{\NN}{\mathbb{N}}
\nc{\QQ}{\mathbb{Q}}
\rc{\AA}{\mathbb{A}}
\let\oldmarginpar\marginpar
\renewcommand\marginpar[1]{\-\oldmarginpar[\raggedleft\footnotesize #1]%
{\raggedright\footnotesize #1}}
\nc\note[1]{\marginpar{#1}}
\title{Equivalence of the Derived Category of a Variety with a Singularity Category}
\author{M. Umut Isik}
\begin{document}
\begin{abstract}
  We prove an equivalence between the derived category of a variety and the equivariant/graded singularity category of a corresponding singular variety. The equivalence also holds at the dg level. 
\end{abstract}

\maketitle

Let $Y$ be a smooth variety over a field $k$.  
An object of consideration is the bounded derived category $\Db(Y)$ of coherent sheaves on $Y$.
For a singular variety $Z$, one can also consider the quotient of $\Db(Z)$ by the full subcategory of perfect complexes. This quotient, denoted $\Dsg(Z)$ is called the singularity category. When $Z$ is smooth with enough locally free sheaves, all bounded complexes of coherent sheaves are quasi-isomorphic to perfect complexes, so $\Dsg(Z)$ is trivial in this case. In the presence of a $k^\times$-action or induced grading on the structure sheaf, one also considers a $k^\times$-equivariant or graded version of the singularity category, namely the categories $\Dsg^{k^\times}(Z)$ or $\Dsggr(Z)$.  

The main result of this paper is an equivalence between the derived category of any variety which is the zero scheme of a section of a vector bundle on a smooth variety, and the $k^\times$-equivariant/graded singularity category of a corresponding singular variety $Z$ which we now describe.

The singular variety $Z$ depends on the expression of $Y$ as the zero scheme of a regular section $s\in \H^0(X, \cE)$ of a vector bundle $E$ with sheaf of sections $\cE$ on a smooth variety $X$. Let $\pi: E \to X$ be the projection map.
Let $W: E \dual \to k$ be the function on the total space of the dual vector bundle $E \dual$ induced by the $s$, i.e. the pairing of the pullback of $s$ to $E \dual$ with the tautological section of the pullback $\pi^*\cE\dual$. 
Let $Z = W^{-1}(0) \subset E \dual$ be the zero locus of this function. $Z$ has a dilation action of $k^\times$ coming from the one on $E \dual$. 
We prove that there is an equivalence of triangulated categories:
\begin{equation}\nonumber
  \Db(Y) \iso \Dsg^{k^\times}(Z)\text{.}
\end{equation}

By the results of V. Lunts and D. Orlov on uniquess of enhancements of triangulated categories to differential graded (dg) categories \cite{uniquenessofenhancements}, this equivalence induces a quasi-equivalence between dg enhancements of the triangulated categories above. In particular, there is a quasi-equivalence 
\begin{equation*}
  \cD^{{b}}(Y) = \cD^{k^\times}_{{sg}}(Z)
\end{equation*}
between the dg category of complexes of coherent sheaves over $Y$ and the $k^\times$-equivariant dg singularity category of Z.

In topological string theory, the bounded derived category of coherent sheaves $\Db(Y)$ apprears as the category of B-branes in a nonlinear sigma model with target Y. The $\CC^\times$-equivariant singularity category appears (via the equivalence to the category of matrix factorizations, which is proven in the affine case in \cite{orlov1}) as the category of B-branes in a Landau-Ginzburg theory.  
In this language, our main result can be expressed as a correspondence between D-branes of type B in a non-linear sigma model with target $Y$ and D-branes of type B in a Landau-Ginzburg model $(E\dual, W)$. It had been conjectured that this correspondence existed as these two theories are related by renormalization group flow. A similar result should also hold for the categories of A-branes, namely the derived Fukaya category of $Y$ and the Fukaya-Seidel category of $(E\dual, W)$.

Another motivation for this work is the philosophy of derived noncommutative geometry. The idea of this approach to geometry is to replace all geometric constructions on a space $X$ by constructions on the dg category of sheaves of modules on $X$ in order to allow one to do geometry with dg-categories that do not come from a space. (see, for example, \cite{KaKoP}) The equivalence above is therefore an equivalence between these two objects considered as two non-commutative spaces in this sense. 

We would like to mention some relations with other works.
The construction of the singular variety $Z$ is similar to the construction in \cite{orlov2} where it is proven (Theorem 2.1 \emph{loc. cit.}) that $\Dsg(\PP(Z)) \iso \Dsg(Y)$. It is not immediately clear, however, whether the equivalence we construct gives the same functor as \emph{loc. cit.} after taking the appropriate Verdier quotients. 

In the work of V. Baranovsky and J. Pecharich \cite{baranovskypecharich}, our equivalence is used in an application of a theorem on how Fourier-Mukai equivalences of DM stacks over $\AA^1$ give equivalences of the singularity categories of the singular fibres; the application provides a generalization of a theorem of Orlov \cite{orlov3} on the derived categories of Calabi-Yau hypersurfaces in weighted projective spaces to products of Calabi-Yau hypersurfaces in simplicial toric varieties with nef anticanonical class. Our argument was previously sketched in \emph{loc. cit.}. It appeared at the time, that taking the split completion of the singularity category was necessary to prove the equivalence but this turned out not to be the case. 

In an upcoming paper, I. Shipman proves, in the line bundle case, what corresponds to our equivalence in the setting of global matrix factorizations, and gives a new proof of a theorem of Orlov on Calabi-Yau hypersurfaces in projective space. \cite{orlov3} 

\subsection*{Acknowledgements} I am grateful to my advisor Tony Pantev for his guidance, support and ideas throughout my studies. I would like to thank Vladimir Baranovsky for his help in this project. I also want to thank Dima Arinkin, Dragos Deliu, Tobias Dyckerhoff, Bernhard Keller, Sasha Kuznetsov and Pranav Pandit for useful discussions.

\subsection*{Summary} We now describe the proof of our equivalence and give a summary of contents. The main part of the proof is given in section 3. 
The proof involves sheaves of graded dg algebras and modules over them, for which we give definitions and set up notation in section 1. The grading on the algebras and modules is to keep track of the $k^\times$-action. We have $\Db(Z) \iso \Db(\pi_* \cO_Z)$, where $\pi: Z \to X$ is the projection. We first replace the pushforward of $\cO_Z$ to $X$ by a resolution 
$$\cB = \sheafsym\cE \oplus \varepsilon \sheafsym\cE$$
which is a sheaf of graded dg algebras on $X$ with differential $\d\varepsilon=s$. Since $\pi_*(\cO_Z)$ and $\cB$ are quasi-isomorphic, we have $\Db(Z) \iso \Db(\cB)$

We apply a Koszul duality statement to $\cB$, called linear Koszul duality, developed by Mirkovic and Riche \cite{linkos}. Linear Koszul duality is an equivalence between the symmetric algebra of a dg vector bundle and the symmetric algebra of the shifted dual dg vector bundle. We explain this setup in section 2. Applying this gives an equivalence between the derived categories of coherent graded dg modules over $\cB$ and over its quadratic dual sheaf of dg algebras 
$$\cA = \exterior^\bullet \cE \otimes \cO_X[t]$$
with Koszul type differential. 
We then show that the duality takes perfect objects to objects which are supported on $X$, hence inducing an equivalence between the quotients by these subcategories. 

Finally, again in section 3, we show taking the quotient of the derived category of coherent modules over $\cA$ by the full subcategory of modules supported on $X$ has the effect of formally inverting the $t$ in $\cA$, which is similar to restricting to the complement of the zero-section. The sheaf of graded dg algebras $\cA[t^{-1}]$ obtained this way is a copy of a shift of the Koszul resolution of $Y$ in $X$ in each degree, and its derived category of coherent dg modules  
is equivalent to the bounded derived category of coherent sheaves on $Y$.

\section{Sheaves of DG-Algebras and Modules}
We work over an algebraically closed field $k$ of characteristic 0. We will work with sheaves of dg algebras and dg modules. 
A pair $(X,\cA)$ where $X$ is an ordinary scheme and $\cA$ is a sheaf of dg algebras with $\cO_X$-linear differential, with non-positive grading, with $\cA^0=\cO_X$, and whose cohomological graded pieces are quasi-coherent over $\cO_X$ is known as a dg scheme \cite{cfkap}. We will work with such pairs $(X,\cA)$ with slightly different assumptions on the grading.

There is an additional internal grading that we will consider on our dg algebras and modules. Geometrically, this grading corresponds to a $k^\times$-action. All the sheaves of dg algebras and dg modules we consider will have the internal grading and we will not always reflect the existence of this internal grading in our notation.  

We refer to \cite{richethesis}, \cite{linkos} for a more complete treatment of sheaves of dg algebras and modules. Here we only give some definitions in order to make the article as self-contained as possible.

A sheaf of graded dg modules $\cM$ over the pair $(X,\cA)$ is an $\cO_X$-module together with the data giving $\cM(U)$, for each open set $U\subset X$,  the structure of a graded dg $\cA(U)$-module, namely an action:
\begin{equation*}
  \cA(U) \otimes_{\cO_X(U)} \cM(U) \to \cM(U)
\end{equation*}
which is a map of complexes of graded sheaves, 
commuting with the restriction maps. We will refer to these modules as $\cA$-modules without referring to the grading every time. 

A morphism between dg $\cA$-modules $\cM$ and $\cN$ is a collection $\{\phi_U\}$ of maps  
\begin{equation*}
  \phi_U : \cM(U) \to \cN(U)
\end{equation*}
 of internal and cohomological degree $0$, commuting with the restriction maps and the action of $\cA(U)$.   

For a sheaf of graded dg algebras or dg modules $\cF$, we will write $\cF = \bigoplus_{i,j} \cF^i_j$, where $\cF^i_j$ is the piece with cohomological grading $i$ and internal grading $j$.
The operator $[m]$ will denote a shift in the cohomological grading, and the operator $(n)$ will denote a shift in internal grading: 
\begin{equation*}
  \cF[m](n)^i_j = \cF^{i+m}_{j+n}\text{,}
\end{equation*}
with $\d_{\cF[m](n)}= (-1)^m \d_\cF$.

We require that our sheaves of graded dg algebras $\cA$ satisfy that $\cA^0_0=\cO_X$, that each piece $\cA^i_j$ is quasi-coherent, and the piece $\cA_0 = \bigoplus_j \cA^j_0$ is cohomologically non-positively graded.

\begin{definition}
  An $\cA$-module $\cM$ is said to be \emph{quasi-coherent} if each $\cM^i$ is quasi-coherent over $\cO_X$. 
  $\cM$ is said to be \emph{coherent} if it is quasi-coherent and its cohomology sheaf $\H(\cM)$ is coherent over $\H(\cA)$ as a sheaf of graded algebras.
\end{definition}

Quasi-coherent $\cA$-modules and morphisms between them form a $k$-linear category which we will denote by $\Cgr(X,\cA)$ or by $\Cgr(\cA)$. In this category, coherent $\cA$-modules form the full subcategory denoted by $\Cgr^{\t{coh}}(\cA)$. 

$\cA$-modules $\cM$ and $\cN$ are said to be quasi-isomorphic if there is a $\phi \in \Hom_{\Cgr(\cA)}(\cM,\cN)$ that induces isomorphisms $\H(\phi): \H^{\bullet}(\cM) \to \H^{\bullet}(\cN)$. The derived category $\Dgr(X,\cA)$ (or briefly $\Dgr(\cA)$) is defined to be the localization of the homotopy category of $\Cgr(\cA)$ by quasi-isomorphisms. $\Dgr(\cA)$ has the structure of a triangulated category. $\Dbgr(\cA)$ is then the full subcategory in $\Dgr(\cA)$ of coherent $\cA$-modules. 

Alternatively, we can consider the dg category $\cCgr(X,\cA) = \cCgr(\cA)$ of $\cA$-modules by allowing morphisms of internal degree $0$ which do not necessarily have cohomological degree $0$. In this case, each $\Hom_{\cCgr}(\cA)(\cM,\cN)$ is a complex with differential given on a morphism of cohomological degree $d$ by 
\begin{equation*}
  \d(f)=\d_\cN \compose f - (-1)^d f \compose \d_\cM \text{.}
\end{equation*}
The category $\H^0(\cCgr(\cA))$ which is the category obtained by taking $\H^0$ of all the $\Hom$ complexes is the homotopy category of $\Cgr(\cA)$. We can localize the dg category $\cCgr(\cA)$ by all quasi-isomorphisms to obtain the dg derived category $\cDgr(\cA)$. In this case, we have
\begin{equation*}
  \H^0(\cDgr(\cA)) \iso \Dgr(\cA) \text{,}
\end{equation*}
and it is said that $\cDgr(\cA)$ is a dg enhancement of $\Dgr(\cA)$.

We can also consider the full subcategory $\cDbgr(\cA)$ of coherent modules in $\cDgr(\cA)$. We then have
\begin{equation*}
  \H^0(\cDbgr(\cA)) \iso \Dbgr(\cA) \text{.}
\end{equation*}

For a set of objects $\{ S_i \}_{i\in \ZZ}$ in a triangulated category $T$, we denote by $\left< S \right>_{i\in \ZZ}$ the smallest full triangulated subcategory  of $T$ containing all the objects $S_i$ that is closed under direct summands. In particular, this subcategory is closed under finite direct sums and all cones; but not necessarily under infinite direct sums even if they exist in $T$. $\left< S \right>_{i\in \ZZ}$ is said to be the subcategory classically generated by the objects $S_i$. See \cite{bvdb} for more details about this concept.

When we say that a sheaf $\cM$ of $\cA$-modules has a property locally in $\Cgr(X,\cA)$ (respectively $\Dgr(X,\cA)$), we mean that at every point $x\in X$, there is an open immersion $i: U \into X$ such that the property in question holds for $i^*\cM$ as an object of $\Cgr(U, \cA_{|U})$ (respectively $\Dgr(U, \cA_{|U})$).

\begin{definition} An object $\cM$ in $\Dgr(X,\cA)$ is said to be \emph{strictly perfect} if it is an object in the full subcategory $\left< \cA(i) \right>_{i\in \ZZ}$. $\cM$ is said to be \emph{perfect} if it is locally strictly perfect. \end{definition}  

We will denote the full subcategory of perfect $\cA$-modules in $\Dbgr(\cA)$ by $\t{Perf}\cA$.

\begin{definition}
The singularity category of $(X,\cA)$ is defined to be the Verdier quotient:
\begin{equation*}
  \Dsggr(\cA) = {\Dbgr(\cA)}/{\t{Perf}\cA}\text{.}
\end{equation*}
\end{definition}

At the dg level, we can take the dg quotient \cite{drinfeld}, \cite{kellercyclic}, \cite{toen} of the dg derived category $\cDbgr(\cA)$ by the full dg subcategory of perfect $\cA$-modules.  
\begin{definition}
The dg singularity category of $(X,\cA)$ is defined to be the dg quotient
\begin{equation*}
  \cDsggr(\cA) = {\cDbgr(\cA)}/{{\mathcal P}{\t{\emph{erf}}}\cA}\text{.}
\end{equation*}
\end{definition}
We then have
\begin{equation*}
  \H^0(\cDsggr(\cA)) \iso \Dsggr(\cA)\text{.}
\end{equation*}

Let $\cA$ be a sheaf of graded algebras considered as a sheaf of graded dg algebras with trivial differential, with $k^\times$ action on $Z=\mathbf{Spec}\cA$ induced by this grading. Let $\Db_{k^\times}(Z)$ be the bounded derived category of $k^\times$-equivariant coherent sheaves on $Z$ and $\Dsg^{k^\times}(Z)$ be the Verdier quotient of this category by the full subcategory of $k^\times$-equivariant vector bundles on $Z$. This latter subcategory corresponds to the full triangulated subcategory of locally finitely generated locally projective modules over $\cA$. Which is the same as $\left< \cA(i) \right>_{i\in \ZZ}$ since every locally finitely generated locally projective $\cA$-module is locally the direct summand of a free module. Thus we have 
\begin{equation*}
  \Dbgr(\cA) = \Db_{k^\times}(Z)\text{,} \hspace{1cm}\hspace{1cm} \Dsggr(\cA) = \Dsg^{k^\times}(Z) \text{.}
\end{equation*}
Also at the dg level, if we define $\cDb_{k^\times}(Z)$ and $\cDsg^{k^\times}(Z)$ in the same manner, we have
\begin{equation*}
  \cDbgr(\cA) = \cDb_{k^\times}(Z)\text{,} \hspace{2cm} \cDsggr(\cA) = \cDsg^{k^\times}(Z) \text{.}
\end{equation*}
Therefore, our definitions agree with the usual definitions in the case of graded algebras.

We now give our definition of the subcategory of modules supported on $X$. 

\begin{definition}
  An $\cA$-module $\cM$ in $\Dgr(\cA)$ is said to be supported on $X$ if it is locally in $\left< \cO_X(i) \right>_{i\in \ZZ}$. The full subcategory of $\Dbgr(\cA)$ consisting of coherent modules supported on $X$ is denoted by $\Db_{X}(\cA)$.
\end{definition}

In the case discussed above when $\cA$ is a sheaf of graded algebras with trivial differential, $\Db_X(\cA)$ is the subcategory of graded modules which have support on the subscheme $X$ in $Z=\mathbf{Spec}(\cA)$; that is, those objects which are acyclic when pulled back to the open subset $Z \backslash X$. 

If we have a morphism $\varphi: \cA \to \cB$ of non-positively (cohomologically) graded sheaves of graded dg algebras then it induces derived functors
\begin{equation*}
  L\varphi_* : \Dgr(\cA) \to \Dgr(\cB) 
\end{equation*} 
given by the derived tensor product
\begin{equation*}
  \cM \mapsto \cM {\Lotimes}_\cA \cB 
\end{equation*}
and
\begin{equation*}
  R\phi^* : \Dgr(\cB) \to \Dgr(\cA) 
\end{equation*}
given by the restriction of scalars functor. In order to define the derived tensor product above, one needs to use the existence of $K$-flat resolutions. A $K$-flat resolution of a module $\cM$ is a module $\cM'$ with a quasi-isomorphism $\cM' \to \cM$ such that $\cM'$ takes acyclic complexes to acyclic complexes under tensor product. The existence of $K$-flat resolutions in our case is treated in \cite{richethesis} section 1.7. The following proposition shows that the derived category of a sheaf of graded dg algebras only depends on the quasi-isomorphism type.
\begin{proposition}\label{transfer}
  If $\varphi: \cA \to \cB$ is a quasi-isomorphism of sheaves of graded dg algebras which are cohomologically non-positively graded, then the functors $R\varphi^*$ and $L\varphi_*$ are inverse equivalences giving
  \begin{equation*}
    \Dgr(\cA) \iso \Dgr(\cB)\text{.}
  \end{equation*}
Moreover, these functors restrict to an equivalence
  \begin{equation*}
    \Dbgr(\cA) \iso \Dbgr(\cB)\text{.}
  \end{equation*}
\end{proposition}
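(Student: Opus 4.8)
The plan is to exhibit $L\varphi_*$ and $R\varphi^*$ as an adjoint pair whose unit and counit are both isomorphisms; full faithfulness and essential surjectivity of $L\varphi_*$ then follow formally. The basic observation driving everything is that $R\varphi^*$ is restriction of scalars along $\varphi$, which does not change the underlying complex of a $\cB$-module. Consequently $R\varphi^*$ is exact, needs no deriving (so $R\varphi^* = \varphi^*$ on the derived category and it sends quasi-isomorphisms to quasi-isomorphisms), and, crucially, is \emph{conservative}: a morphism of $\cB$-modules is a quasi-isomorphism exactly when its underlying morphism of complexes is, and that is precisely what $R\varphi^*$ records. The module-level adjunction between extension of scalars $\varphi_*=(-)\otimes_\cA\cB$ and restriction $\varphi^*$ passes to the derived functors $(L\varphi_*,R\varphi^*)$.

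First I would check that the unit $\eta_\cM:\cM\to R\varphi^*L\varphi_*\cM$ is an isomorphism in $\Dgr(\cA)$ for every $\cM$. Choosing a $K$-flat resolution $\cM'\to\cM$ over $\cA$ (these exist by \cite{richethesis}), $L\varphi_*\cM$ is computed as $\cM'\otimes_\cA\cB$, and under this identification $\eta_\cM$ becomes the map $\cM'=\cM'\otimes_\cA\cA\to\cM'\otimes_\cA\cB$ given by $\Id_{\cM'}\otimes\varphi$. Because $\cM'$ is $K$-flat, tensoring with it preserves quasi-isomorphisms, and $\varphi$ is a quasi-isomorphism, so this map is a quasi-isomorphism and $\eta_\cM$ is invertible. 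Hence $L\varphi_*$ is fully faithful.

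Next I would deduce that the counit $\epsilon_\cN:L\varphi_*R\varphi^*\cN\to\cN$ is an isomorphism for every $\cB$-module $\cN$. The triangle identity gives $R\varphi^*(\epsilon_\cN)\compose\eta_{R\varphi^*\cN}=\Id_{R\varphi^*\cN}$; since $\eta$ is invertible by the previous step, $R\varphi^*(\epsilon_\cN)$ is invertible, and conservativity of $R\varphi^*$ then forces $\epsilon_\cN$ to be invertible. Thus $L\varphi_*$ is an equivalence with inverse $R\varphi^*$, giving $\Dgr(\cA)\iso\Dgr(\cB)$. For the restriction to the bounded categories, the key is that $\varphi$ being a quasi-isomorphism of sheaves of algebras makes $\H(\varphi):\H(\cA)\to\H(\cB)$ an isomorphism of sheaves of graded algebras. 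Since $R\varphi^*$ leaves underlying cohomology unchanged and coherence is defined by asking $\H(\cM)$ to be coherent over $\H(\cA)$, this identification matches the coherence conditions on both sides; combined with the isomorphism $\H(L\varphi_*\cM)\iso\H(\cM)$ coming from the unit, it shows both functors preserve coherence, so the equivalence restricts to $\Dbgr(\cA)\iso\Dbgr(\cB)$.

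The hard part will be the invertibility of the unit: it relies on the existence of $K$-flat resolutions in this sheafy, doubly-graded dg setting and on the stability of quasi-isomorphisms under tensoring with a $K$-flat module. Once that technical input is in hand (supplied by \cite{richethesis}), the remainder is a formal consequence of the adjunction together with the exactness and conservativity of restriction of scalars.
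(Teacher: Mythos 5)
Your argument is correct, but it is organized differently from the paper's. For the first half (the unbounded equivalence $\Dgr(\cA)\iso\Dgr(\cB)$) the paper simply defers to \cite{richethesis}; what you write out --- the derived adjunction $(L\varphi_*,R\varphi^*)$, invertibility of the unit via $K$-flatness of the chosen resolution, and then invertibility of the counit from the triangle identity plus conservativity of restriction of scalars --- is essentially the standard argument being cited, so no disagreement there, just more detail. The genuine divergence is in the second half. The paper shows that $L\varphi_*$ preserves coherence by choosing a two-term free presentation of $\H(\cM)$ over $\H(\cA)$, lifting it to a chain of maps of $\cA$-modules, tensoring everything with $\cB$, and reading off a presentation of $\H(\cM\otimes_\cA\cB)$ over $\H(\cB)$. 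You instead observe that the unit being a quasi-isomorphism identifies $\H(L\varphi_*\cM)$ with $\H(\cM)$, and that since $\H(\varphi):\H(\cA)\to\H(\cB)$ is an isomorphism of sheaves of graded algebras, coherence over $\H(\cA)$ and over $\H(\cB)$ are interchangeable; the same remark handles $R\varphi^*$. This is shorter and avoids the lifting-of-presentations step (whose exactness on cohomology the paper leaves somewhat implicit), at the cost of leaning on the already-established isomorphism of the unit; the paper's version is more hands-on and does not presuppose the first half beyond the existence of $K$-flat resolutions. Both are sound; you might just add a sentence noting that quasi-coherence of the graded pieces over $\cO_X$ (the other half of the paper's definition of coherent) is also preserved, which is immediate for restriction of scalars and routine for the tensor product.
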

\begin{proof}
  The first part of the proposition is treated in \cite{richethesis}. For the second part, we need to see that the derived pullback and pushforward functors map coherent modules to coherent modules. The statement is clear for $R\varphi^*$. For $L\varphi_*$, let $\cM$ be a coherent module over $\cA$, which means that $\H(\cM)$ is coherent over $\H(\cA)$. By taking a $K$-flat resolution if necessary, we can assume $\cM$ to be $K$-flat. Since $\cM$ is coherent, we have a two-term resolution 
  \begin{equation*}
    \bigoplus_{\rho=1}^{k_1} \H(\cA)[i_\rho](j_\rho) \too \bigoplus_{\rho=1}^{k_2} \H(\cA)[i'_\rho](j'_\rho) \too \H(\cM) \too 0\text{.}
  \end{equation*}
By picking representatives, we can consider the chain of maps
\begin{equation*}
  \bigoplus_{\rho=1}^{k_1} \cA[i_\rho](j_\rho) \too \bigoplus_{\rho=1}^{k_2} \cA[i'_\rho](j'_\rho) \too \cM \text{.}
\end{equation*}
which induces the above two-term resolution at the level of cohomology. We can take the derived tensor product of these terms with $\cB$. We get a commuting diagram
\begin{equation*}
  \begin{CD}
     \bigoplus_{\rho=1}^{k_1} \cA[i_\rho](j_\rho)  @>>> \bigoplus_{\rho=1}^{k_2} \cA[i'_\rho](j'_\rho)  @>>> \cM \\
     @V{\t{q.i.}}VV  @V{\t{q.i.}}VV @V{\t{q.i.}}VV \\
     \bigoplus_{\rho=1}^{k_1} (\cA \otimes_\cA \cB) [i_\rho](j_\rho)  @>>> \bigoplus_{\rho=1}^{k_2} (\cA \otimes_\cA \cB)[i'_\rho](j'_\rho)  @>>> \cM \otimes_\cA \cB\text{.}\\
  \end{CD}
\end{equation*}
Since $\cM$ is $K$-flat, the derived tensor product is the usual tensor product and we can see by taking cones in $\Dgr(\cA)$ that the vertical morphisms are quasi-isomorphisms. So $\H(\cM \otimes_\cA \cB)$ is coherent over $\H(\cB)$. Thus, by our definition, $R\varphi_* \cM$ is coherent over $\cB$.  
\end{proof}
We will also be able to apply this proposition to some sheaves of dg algebras which do not satisfy the grading assumption by the use of a regrading trick which we discuss in section \ref{proofsection} below. 

\section{Koszul Duality}
In this section, we explain the version of Koszul duality by I. Mirkovic and S. Riche \cite{linkos} for the case of symmetric algebras of dg vector bundles, and describe the constructions and results. It is called linear Koszul duality.

Koszul duality \cite{priddy}, \cite{bgs} is a homological duality phenomenon which generalizes a derived equivalence between an algebra and a corresponding Koszul dual algebra. The main example for us is the equivalence of derived categories between the symmetric and exterior algebras of a vector space, which was used in \cite{bgg} to calculate the bounded derived category of coherent sheaves on projective space. Koszul duality can, in many cases, be shown to express a form of Morita or tilting equivalence between triangulated categories (see for example \cite{beh87}) even though this is often not expressed explicitly.  

Linear Koszul duality \cite{linkos} has the following aspects.
First, the duality is obtained in a relative setting with sheaves of algebras, rather than an algebra over a point. In other words, it is a fiber-wise application of Koszul duality. Second, the duality is obtained for dg algebras and modules. Third, the duality provides a contravariant equivalence rather than the usual covariant one. 

Let $X$ be a scheme. Consider a complex $\cX$ of vector bundles:
\begin{equation*}
  \dots \to 0 \to \cV \xto{f} \cW \to 0\to\dots\text{,}
\end{equation*}
where sections of $\cV$ have cohomological degree $-1$ and internal degree $1$  and sections of $\cW$ have cohomological degree $0$ and internal degree $1$ . For a complex of graded vector bundles (a dg vector bundle) $\cG$ over $\cO_X$, we define the graded symmetric algebra $\sheafsym\cG$ of $\cG$ to be the sheaf tensor algebra of $\cG$ modulo the graded commutation relations $a\otimes b = (-1)^{(\deg_h{a})(\deg_h{b})} b\otimes a$, where $\deg_h$ denotes cohomological degree.

Let $\cB = \sheafsym \cX$ and let $\cA = \sheafsym \cY$ where $\cY$ is the dg vector bundle given by
\begin{equation*}
  \dots \to 0 \to \cW\dual \xto{-f\dual} \cV\dual \to 0 \to \dots\text{,}
\end{equation*}
where sections of $\cV\dual$ are in cohomological degree $1$ and internal degree $-1$ and sections of $\cW\dual$ are in cohomological degree $2$ and in internal degree $-1$ .

The functors that induce the Koszul duality between $\cB$ and $\cA$ are given by 
\begin{equation*}
  \begin{array}{ccccccc}
    F: \Cgr(\cB)\op & \too & \Cgr(\cA) & \hspace{2cm} & G: \Cgr(\cA)\op & \too & \Cgr(\cB) \\
    \cM & \longmapsto & \cA \otimes_{\cO_X} \cM\dual & \, & \cN & \longmapsto & \cB \otimes_{\cO_X} \cN\dual\text{,}
\end{array}
\end{equation*}

\noindent where the  differential of $F(\cM)$ is the sum of two differentials $\d_{F(\cM)} = \d_1 + \d_2$. The first differential $\d_1$ is the product of differentials of $\cM$ and $\cA$, given by 
$$\d_1(a \otimes m) = \d_{\cA}a \otimes m + (-1)^{\deg_hm}a\otimes \d_{\cM\dual}m \text{.}$$
The Koszul type differential $\d_2$ is the sum of Koszul type differentials for $\cV$ and $\cW$. The differential for $\cV$ is given by the composition:
\begin{equation*}
  \cA \otimes_{\cO_{X}} \cM\dual \to \cA \otimes_{\cO_{X}} \cM\dual \to \cA \otimes_{\cO_{X}} \cV \otimes_{\cO_{X}} \cV\dual \otimes_{\cO_{X}} \cM\dual \to \cA\otimes_{\cO_{X}} \cM\dual\text{,}
\end{equation*}
where the first map is a sign adjustment ($a\otimes m \mapsto (-1)^{\deg_h m} a \otimes m$), the second map is the map induced by $\t{id}:\cO_X \to \cV \otimes \cV\dual$ given by the section $\t{id}$ of $\cV\otimes \cV\dual$ and the third map is given by the action of $\cV$ on $\cA$ and of $\cV\dual$ on $\cM\dual$. This corresponds, on a fiber by fiber basis, to multiplication by id$\in V\otimes V^*$ for each fiber $V$ of $\cV$. The differential for $\cW$ is defined by replacing $\cV$ by $\cW$ in this composition. The sum of these two differentials gives $\d_2$. The differential for $G(\cN)$ is given in the same manner.

When restricted to subcategories with the appropriate finiteness conditions, these functors take acyclic complexes to acyclic complexes, hence induce functors between the derived categories. However, these finiteness conditions are different from our coherence condition because the coherent sheaves do not map to each other under the functors. Instead, $\Dbgr(\cB)$ and $\Dbgr(\cA)$ are equivalent to the subcategories of objects satisfying these finiteness conditions. See \cite{linkos}, 3.6.  

\begin{theorem} (\cite{linkos}, Theorems 3.7.1 and 3.6.1)
  The functors $F$ and $G$ induce an exact equivalence
  \begin{equation*}
    \Dbgr(\cB)\op \iso \Dbgr(\cA)
  \end{equation*}
  between the derived categories of coherent dg modules. 
  \label{koszulequivalence}
\end{theorem}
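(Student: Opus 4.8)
Since the statement is quoted verbatim from Mirkovic--Riche, the plan is to reconstruct the argument of \emph{loc. cit.}, whose essential input is the classical Koszul resolution applied fibrewise. First I would verify that $F$ and $G$ are well defined at the level of complexes. The differential $\d_1+\d_2$ must square to zero: $\d_1^2=0$ because it is the tensor differential assembled from $\d_\cA$ and $\d_{\cM\dual}$; the two Koszul summands of $\d_2$ each square to zero, being contraction against a fixed identity section of $\cV\otimes\cV\dual$, respectively $\cW\otimes\cW\dual$, and their cross term cancels by the graded sign rule; and finally $\d_1\d_2+\d_2\d_1=0$. This last identity is exactly where the shifted dual $\cY$ of $\cX$ and the sign in $-f\dual$ are forced: the compatibility of $f$ acting on $\cB$ with $-f\dual$ acting on $\cA$ is what makes the remaining cross terms cancel. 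One then checks that $F(\cM)$ really is an $\cA$-module, that morphisms are sent to morphisms, and that the assignment is functorial and contravariant, and likewise for $G$.

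Second, I would show that $F$ and $G$ descend to the derived categories. The point is that they send acyclic complexes, within the finiteness classes of \cite{linkos}, 3.6, to acyclic complexes, so they pass to exact functors of triangulated categories; moreover those finiteness classes are equivalent to $\Dbgr(\cB)$ and $\Dbgr(\cA)$, even though the subcategories of coherent modules are not preserved on the nose. I would take this reduction, together with the existence of $K$-flat resolutions, as already established in \emph{loc. cit.}

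The crux is to exhibit natural quasi-isomorphisms $GF\iso\Id$ and $FG\iso\Id$. Unwinding the definitions and using reflexivity of the finite-rank $\cM$, one finds $GF(\cM)\iso\cB\otimes_{\cO_X}\cA\dual\otimes_{\cO_X}\cM$ equipped with the total differential assembled from $\d_1$ and the two Koszul contractions. There is a natural augmentation realizing a comparison map $GF(\cM)\to\cM$, and I would prove it is a quasi-isomorphism by filtering by internal degree and running the resulting spectral sequence. The heart of the matter is a Koszul resolution lemma: the tautological Koszul complex attached to the identity sections of $\cV\otimes\cV\dual$ and $\cW\otimes\cW\dual$ is, through its augmentation, a resolution of $\cO_X$. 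This is the fibrewise, dg form of the classical fact that $\sheafsym\cU\otimes_{\cO_X}\exterior\cU\dual$ resolves $\cO_X$ for a vector bundle $\cU$. Treating the $\cV$- and $\cW$-contributions separately and recombining them along the filtration collapses the Koszul bimodule onto $\cO_X$, which yields $GF\iso\Id$; the identity $FG\iso\Id$ is symmetric. Combined with the first two steps, this gives the asserted contravariant equivalence $\Dbgr(\cB)\op\iso\Dbgr(\cA)$.

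The main obstacle is making this relative, doubly graded, dg version of the Koszul resolution rigorous. Over a point one could simply invoke the classical statement, but in the sheaf setting one cannot restrict to a fibre, so the filtration and its spectral sequence must be carried out with sheaves throughout. Simultaneously one must keep every sign in $\d_2$ consistent with the interaction between the cohomological grading, which turns $\sheafsym$ of the odd bundle into an exterior algebra and dictates the degree shifts defining $\cY$, and the internal grading; and one must check that the finiteness conditions are preserved, so that dualizing is harmless and the augmentation is a genuine quasi-isomorphism rather than merely a termwise map. This careful bookkeeping, rather than any single conceptual difficulty, is where the real work lies.
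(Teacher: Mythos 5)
The paper does not prove this theorem: it is imported wholesale from Mirkovic--Riche (\cite{linkos}, Theorems 3.6.1 and 3.7.1) and used as a black box, so there is no internal proof to measure your reconstruction against. Judged on its own terms, your outline correctly identifies the architecture of the argument in \emph{loc. cit.}: well-definedness of the Koszul differential $\d_1+\d_2$ (with the sign in $-f\dual$ forced by the anticommutation of the cross terms), preservation of acyclicity on the finiteness classes of their Section 3.6 so that $F$ and $G$ descend to exact functors, and the identification $GF(\cM)\iso\cB\otimes_{\cO_X}\cA\dual\otimes_{\cO_X}\cM$ collapsing onto $\cM$ via the tautological Koszul resolution of $\cO_X$, run through a filtration by internal degree. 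You are also right that the delicate points are exactly the ones you flag: the duality $\cM\mapsto\cM\dual$ is only well behaved on modules satisfying the finiteness/boundedness conditions of \cite{linkos} 3.6 (this is why the equivalence is stated for those subcategories and only afterwards identified with $\Dbgr(\cB)$ and $\Dbgr(\cA)$), and the spectral sequence of the internal-degree filtration must be shown to converge in the sheaf setting rather than fibrewise. What you have is a faithful proof strategy rather than a proof; since the paper itself delegates all of this to \cite{linkos}, that is an acceptable level of detail here, but be aware that none of the hard analytic content (convergence of the filtration argument, the precise finiteness hypotheses under which $(\cM\dual)\dual\iso\cM$) has actually been carried out in your sketch.
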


\begin{remark}
  It would be more appropriate for our purposes to use a covariant version of this duality. However, at this stage, it was easier to rely on Mirkovic and Riche's contravariant version. If $\cB$ is Gorenstein in the appropriate sense, then composing with the functor $\sheafrhom_\cB(\bullet, \cB)$ should give a covariant version of this duality. After we use the above theorem in the next section, we use the fact that $Y$ is Gorenstein to turn the equivalence into a covariant one.  
\end{remark}

\section{The Equivalence of the Singularity Category and the Derived Category}\label{proofsection}

We now continue with the notation of the introduction. $X$ is a smooth variety, $Y \subset X$ is given as the zero scheme of a regular section $s \in \H^0(X,\cE)$ where $\cE$ is the sheaf of local sections of the vector bundle $\pi: E \to X$. $W$ is the pairing of the pullback of $s$ to $E$ and the canonical section of the pullback of $E\dual$. Let $Z$ = $W^{-1}(0)$.

The $k^\times$ action on $Z$ induces a grading on sections of $\pi_*\cO_Z$ making it a sheaf of graded algebras. We have $\Dsg^{k^\times}(Z) \iso \Dsggr(\pi_*\cO_Z)$. 

If the sections of $\cE$ are considered in internal degree $1$ and homological degree $0$, then $\sheafsym \cE$ gives the sheaf of algebras of functions on $E\dual$. The exact sequence
\begin{equation}\nonumber
  0 \to \varepsilon \sheafsym \cE \xto{s} \sheafsym \cE \to \pi_*\cO_Z \to 0 
\end{equation}
gives us a resolution of $\pi_*\cO_Z$. Here, $\varepsilon$ is a formal variable in homological degree $-1$ and internal degree $1$. Let $\cB$ be the sheaf of dg algebras 
\begin{equation*}
  \cB = \sheafsym(\dots \to 0 \to \varepsilon\cO_X \xto{s} \cE \to 0 \to \dots)\text{,}
\end{equation*}
where $\varepsilon\cO_X$ is in homological degree $-1$ and internal degree $1$  and $\cE$ is in homological degree $0$ and internal degree $1$. We have that $\varepsilon$ commutes with the other variables and $\varepsilon^2 = 0$ because of the graded commutation relation, so $\cB$, which can also be written as 
\begin{equation}\nonumber
  \cB = \sheafsym\cE \oplus \varepsilon \sheafsym\cE \hspace{1cm} \d_\cB\varepsilon = s
\end{equation}
is the resolution of $\pi_*\cO_Z$ above. So the map $\varphi: \cB \to \pi_*\cO_Z$ which sends $\sheafsym\cE$ to $\pi_*\cO_Z$ and $\varepsilon$ to 0, is a quasi-isomorphism of sheaves of graded dg algebras. 
By Proposition \ref{transfer}, we have an equivalence
\begin{equation*}
  \Dbgr(\cB) = \Dbgr(\pi_*\cO_Z)\text{.}
\end{equation*}
It is clear that, under this equivalence, $\cB$ is taken to $\pi_*\cO_Z$ and vice versa since $\pi_*\cO_Z$ is quasi-isomorphic to $\cB$ as $\cB$-modules, so we also have the induced equivalence of the quotients
\begin{equation*}
  \Dsggr(\cB) \iso \Dsg^{k^\times}(Z)\text{.}
\end{equation*}

We now apply Koszul duality to $\cB$. Let $\cA$ be given by
\begin{equation*}
  \cA = \sheafsym(\dots\to 0 \to \cE\dual \xto{-s\dual} t\cO_X \to 0 \to \dots)\text{,}
\end{equation*}
where $\cE\dual$ are in homological degree $1$ and internal degree $-1$ and $t$ is in homological degree $2$ and internal degree $-1$. So $\cA$ is given by
\begin{equation*}
  \cA = \bigwedge \cE\dual \otimes_{\cO_X} \cO_X[t] \hspace{1cm} \d_\cA f = tf(s)
\end{equation*}
and $\d_\cA$ is $t$-linear. 

By Theorem \ref{koszulequivalence}, we have an equivalence between the derived categories of graded, coherent modules
\begin{equation}\nonumber
  \Dbgr(\cB) \cong \Dbgr(\cA)^{\t{op}}\text{.}
  \label{equivstep1}
\end{equation}

\begin{proposition}\label{quotientequiv}
  The Koszul duality functor $F$ takes perfect modules to modules supported on $X$. The functor $G$ takes modules supported on $X$ to perfect modules. $F$ and $G$ therefore induce an equivalence:
\begin{equation*}
  \Dsggr(\cB) \iso \Dbgr(\cA)/\Db_{X}(\cA)\text{.}
\end{equation*}
\end{proposition}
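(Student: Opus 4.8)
The plan is to verify the two claims about $F$ and $G$ by reducing to their action on the natural generators of the relevant subcategories, and then to observe that the resulting containments, together with the fact that $F$ and $G$ are mutually inverse (Theorem \ref{koszulequivalence}), force $F$ and $G$ to restrict to an equivalence between $\t{Perf}\cB$ and $\Db_X(\cA)$, which then descends to the Verdier quotients. Recall that $\t{Perf}\cB$ is, locally, the subcategory classically generated by the twists $\cB(i)$, while $\Db_X(\cA)$ is, locally, the subcategory classically generated by the twists $\cO_X(i)$. Since $F$ and $G$ are exact and commute with the shift and twist operations, and since $\left< - \right>$ is built from cones, shifts and direct summands, it suffices to compute $F$ on $\cB$ and $G$ on $\cO_X$ and to check that these computations are compatible with restriction to open subsets.

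The key computation is that $G(\cO_X) = \cB$. Indeed, $\cO_X$ is the $\cA$-module on which $\cE\dual$ and $t$ act by zero, so its dual $\cO_X\dual = \cO_X$ also carries the trivial action. By definition $G(\cO_X) = \cB \otimes_{\cO_X} \cO_X\dual = \cB$ as an $\cO_X$-module, with differential $\d_1 + \d_2$. The first differential $\d_1$ reduces to $\d_\cB$ because $\cO_X\dual$ has zero differential, while the Koszul differential $\d_2$ vanishes identically, being built from the action of $\cE\dual$ and $t$ on $\cO_X\dual$, which is zero. Hence $G(\cO_X)$ is precisely the free rank-one module $\cB$, which lies in $\t{Perf}\cB$. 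Tracking twists in the same way gives $G(\cO_X(i)) = \cB(-i)$, so $G$ sends the generators of $\Db_X(\cA)$ to generators of $\t{Perf}\cB$. Dually, applying $F$ and using that $F$ and $G$ are inverse equivalences yields $F(\cB) \iso \cO_X$, and more generally that each $F(\cB(i))$ is a twist of $\cO_X$, so $F$ sends the generators of $\t{Perf}\cB$ into $\Db_X(\cA)$.

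To pass from generators to the whole subcategories I use that $F$ and $G$ are local: they are built from $\otimes_{\cO_X}$, $\cO_X$-linear duals, and the global identity sections of $\cV\otimes\cV\dual$ and $\cW\otimes\cW\dual$, all of which commute with restriction along an open immersion $U \into X$. Thus for a perfect $\cM$, choosing for each point an open $U$ with $\cM_{|U} \in \left< \cB_{|U}(i) \right>_{i\in\ZZ}$, I obtain $F(\cM)_{|U} \iso F(\cM_{|U}) \in \left< \cO_U(i) \right>_{i\in\ZZ}$ by exactness of $F$ and the computation above; hence $F(\cM)$ is supported on $X$. The same argument with $G$ shows $G(\Db_X(\cA)) \subseteq \t{Perf}\cB$. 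Since $G\compose F$ and $F \compose G$ are the identity, these two containments are in fact equalities of essential images, so $F$ and $G$ restrict to inverse equivalences between $\t{Perf}\cB$ and $\Db_X(\cA)$. An exact equivalence carrying one thick subcategory onto another descends to an equivalence of Verdier quotients, which gives the asserted
\begin{equation*}
  \Dsggr(\cB) \iso \Dbgr(\cA)/\Db_X(\cA)
\end{equation*}
(contravariant, inherited from Theorem \ref{koszulequivalence}).

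I expect the main obstacle to be the compatibility of the Koszul duality functors with the \emph{local} definitions of $\t{Perf}\cB$ and $\Db_X(\cA)$: one must check carefully that $F$ and $G$ commute with restriction to open subsets, and that the explicit identifications $G(\cO_X) = \cB$ and $F(\cB) \iso \cO_X$ hold with the correct differentials, signs, and internal twists, while confirming that $\cB$ and $\cO_X$ satisfy the finiteness conditions under which the functors of Theorem \ref{koszulequivalence} are defined. Once these points are settled, the descent to quotients is formal.
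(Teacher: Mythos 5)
Your argument is correct and follows essentially the same route as the paper: reduce to the generators $\cB(i)$ and $\cO_X(i)$ using that $F$ and $G$ commute with restriction to open subsets, compute the functors on these generators, and descend to the Verdier quotients. The only cosmetic difference is that you compute $G(\cO_X)=\cB$ directly and deduce $F(\cB)\iso\cO_X$ from invertibility, whereas the paper identifies $F(\cB)$ directly as the Koszul resolution of $\cO_X$ (citing Mirkovic--Riche) and treats $G$ symmetrically.
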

\begin{proof}
If $U\subset X$ is an open subset, then it is clear from the definition of $F$ that it is defined locally, i.e. that the following diagram strictly commutes at the level of objects \\
\begin{equation*}
  \begin{CD}
    \Dbgr(X,\cB) @>{F}>> \Dbgr(X,\cA) \\
    @V{i^*}VV @V{i^*}VV \\
    \Dbgr(U,\cB_{\mid U}) @>{F}>> \Dbgr(U,\cA_{\mid U}) \text{.}
  \end{CD}
\end{equation*}

It therefore suffices to show that $F_{\mid U}$ takes perfect modules to modules supported on $X$ for affine open sets $U$. We have that $F(\cB(i))=\cO_X(i)$ because $F(\cB)$ is the Koszul resolution of $\cO_X$ (\cite{linkos} section 2.5) so $F$ takes perfect modules to those which are locally in the subcategory classically generated by $\cO_X(i)$ for $i\in \ZZ$. Therefore $F$ takes perfect modules to modules supported on $X$.
Similarly, $G$ is defined locally and takes $\cO_X(i)$ to $\cB(i)$ and therefore takes modules which are supported on $X$ to perfect modules. 
\end{proof}

We now consider the quotient on the right side of the equality of Proposition \ref{quotientequiv}:
\begin{equation*}
  \Dsggr(\cB) \iso \Dbgr(\cA)/\Db_{X}(\cA)\text{.}
  \label{}
\end{equation*}
We will prove that $\Dbgr(\cA)/\Db_{X}(\cA)$ is equivalent to the bounded derived category $\Db(Y)$ of coherent sheaves on $Y$. 

First, we show that taking the quotient by the modules supported on $X$ has the effect of restricting the space to the complement of their support, similar to Serre's original result on the category of coherent sheaves on projective space. Define:
\begin{equation*}
  \cA[t^{-1}] = \bigwedge \cE\dual \otimes_{\cO_X} \cO_X[t,t^{-1}]\text{,} \hspace{1cm} \d_\cA f = tf(s)\text{.}
\end{equation*}

\begin{proposition}\label{propontheway}
  There is an equivalence of triangulated categories
  \begin{equation*}
    \Dbgr(\cA)/\Db_{X}(\cA) \iso \Dbgr(\cA[t^{-1}])\text{.}
  \end{equation*}
\end{proposition}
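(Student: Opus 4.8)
The plan is to realize the quotient functor explicitly as localization at $t$. The inclusion $\iota:\cA\to\cA[t^{-1}]$ of sheaves of graded dg algebras is flat, so the extension of scalars
\[
  Q=(-)\otimes_\cA\cA[t^{-1}]:\Dbgr(\cA)\too\Dbgr(\cA[t^{-1}])
\]
is already exact, with no $K$-flat resolution needed (which conveniently sidesteps the non-positivity hypothesis of Proposition \ref{transfer}); it preserves coherence since $\H(\cM[t^{-1}])=\H(\cM)[t^{-1}]$ is coherent over $\H(\cA[t^{-1}])=\H(\cA)[t^{-1}]$. First I would check that $Q$ annihilates $\Db_{X}(\cA)$: the element $t$ acts by $0$ on $\cA^0_0=\cO_X$ (the augmentation $\cA\to\cO_X$ kills $t$ and $\cE\dual$), so $\cO_X(i)\otimes_\cA\cA[t^{-1}]=0$, and as objects of $\Db_{X}(\cA)$ are locally finite iterated extensions and summands of the $\cO_X(i)$, they are all killed. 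By the universal property of the Verdier quotient, $Q$ descends to
\[
  \bar Q:\Dbgr(\cA)/\Db_{X}(\cA)\too\Dbgr(\cA[t^{-1}]),
\]
and the goal is to prove $\bar Q$ is an equivalence.

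Essential surjectivity is the \emph{clearing denominators} step: given a coherent $\cA[t^{-1}]$-module $\cN$, choose local generators of $\H(\cN)$ over $\H(\cA[t^{-1}])$ and multiply them by suitable powers of the invertible element $t$ to obtain a coherent $\cA$-submodule $\cM$ with $\cM[t^{-1}]\iso\cN$, whence $\bar Q(\cM)\iso\cN$.

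The essential content is full faithfulness, which I would obtain by presenting both Hom groups as the same colimit along multiplication by $t$. Since $t$ has cohomological degree $2$ and internal degree $-1$, multiplication by $t$ is a degree-zero morphism $\cN\to\cN[2](-1)$, and $\cN[t^{-1}]$ is the homotopy colimit of $\cN\xto{t}\cN[2](-1)\xto{t}\cN[4](-2)\xto{t}\cdots$. Using the extension--restriction adjunction and that a coherent $\cM$ is compact, this yields
\[
  \Hom_{\Dbgr(\cA[t^{-1}])}(Q\cM,Q\cN)\iso\varinjlim_n\Hom_{\Dbgr(\cA)}(\cM,\cN[2n](-n)).
\]
On the quotient side, a morphism is a roof $\cM\xleftarrow{a}\cM'\to\cN$ with the cone of $a$ in $\Db_{X}(\cA)$; the maps $t^n:\cM[-2n](n)\to\cM$ are such roofs, because the cone of $t^n$ is $\cM\Lotimes_\cA(\cA/t^n\cA)$, an iterated extension of copies of $\cA/t\cA=\bigwedge\cE\dual$, which is supported on $X$. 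I would show this subsystem of roofs is cofinal, so that $\Hom$ in the quotient is the same colimit $\varinjlim_n\Hom_{\Dbgr(\cA)}(\cM,\cN[2n](-n))$; matching the two presentations exhibits $\bar Q$ as fully faithful.

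I expect the cofinality of the $t$-power roofs to be the main obstacle, resting on two points. First, one must identify the kernel precisely: a coherent $\cM$ with $\cM[t^{-1}]=0$ has $t$-torsion cohomology, hence is supported set-theoretically on $\{t=0\}=X$, and using smoothness of $X$ (so coherent sheaves on $X$ admit finite locally free resolutions) one checks $\cM$ lies locally in $\left<\cO_X(i)\right>_{i\in\ZZ}$, i.e. $\ker Q=\Db_{X}(\cA)$. Second, given an arbitrary roof with cone $C\in\Db_{X}(\cA)$, one must refine it through some $t^n$; this amounts to $t$ acting locally nilpotently on $C$ (immediate, as $t=0$ on each $\cO_X(i)$ and $C$ is locally a finite iterated extension of these), so that the composite $\cM[-2n](n)\xto{t^n}\cM\to C$ vanishes and $t^n$ factors through $\cM'$. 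The delicate part is passing from local nilpotence to the vanishing of a single \emph{global} morphism $C\to C[2n](-n)$: I would invoke quasi-compactness of $X$ to extract a uniform exponent from a finite affine cover, and then argue that the residual gluing (Mayer--Vietoris) obstructions are themselves annihilated by further powers of $t$, so that a sufficiently large global power of $t$ factors as required. This bookkeeping, rather than any conceptual difficulty, is where I expect the real work to lie.
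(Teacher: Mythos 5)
Your overall strategy is sound and is genuinely different from the paper's. You verify the three hypotheses of the abstract Verdier localization criterion for the functor $Q=(-)\otimes_\cA\cA[t^{-1}]$: that its kernel is exactly $\Db_X(\cA)$, that it is essentially surjective, and that it is full and faithful via a cofinality argument for $t$-power roofs. The paper instead writes down an explicit quasi-inverse, namely the truncation $\cN\mapsto\cN_{\leq 0}$ to non-positive internal degrees, and checks that both composites are isomorphic to the identity: the counit $\cA[t^{-1}]\otimes_\cA\cN_{\leq 0}\to\cN$ is shown to be an isomorphism of complexes by hand, and the cone of the unit $\cM\to(\cM[t^{-1}])_{\leq 0}$ is shown to have cohomology coherent over $\cO_X$, hence to lie in $\Db_X(\cA)$ by Lemma \ref{supportlemma} (the kernel identification you sketch is precisely the content of Lemmas \ref{regradedlemma} and \ref{supportlemma}, which you could simply cite rather than re-derive). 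What the explicit truncation buys is the complete avoidance of the two hard points in your route: no Hom groups in the quotient category ever need to be computed, and no uniform global nilpotence of $t$ on objects of $\Db_X(\cA)$ is required.

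There is one concrete gap in your argument as written. To get
\begin{equation*}
  \Hom_{\Dbgr(\cA[t^{-1}])}(Q\cM,Q\cN)\iso\varinjlim_n\Hom_{\Dbgr(\cA)}(\cM,\cN[2n](-n))
\end{equation*}
you invoke the adjunction and then commute $\Hom_{\Dgr(\cA)}(\cM,-)$ past the homotopy colimit on the grounds that ``a coherent $\cM$ is compact.'' That premise is false here: compact objects of $\Dgr(\cA)$ are the perfect ones, and since $\H(\cA)$ is not regular (its spectrum is essentially the singular variety one is studying), coherent dg modules such as $\cO_X$ itself are not perfect and need not commute with infinite coproducts. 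The conclusion may still be salvageable --- the internal grading forces the cohomologies of the terms $\cN[2n](-n)$ to drift off in internal degree, and the transition maps become isomorphisms after localization --- but this requires a separate argument tailored to the bigraded situation, not an appeal to compactness. Together with the local-to-global bookkeeping for the uniform power of $t$ annihilating an object of $\Db_X(\cA)$ (which you correctly identify as delicate, and which does work by a finite-cover plus Mayer--Vietoris induction since $X$ is quasi-compact), this is where your proof needs real additional work; the paper's truncation functor is exactly the device that makes both issues disappear.
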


Before we proceed with the proof of this proposition, we need to understand that modules supported on $X$, i.e. those that are locally in $\left< \cO_X(i) \right>_{i\in \ZZ}$, are the modules which have cohomology coherent as $\cO_X$-modules. We start with the following lemma. 

\begin{lemma}
  Let $X$ be a regular scheme and let $\cR$ be a sheaf of dg algebras such that $\cR^i=0$ for all $i>0$, $\cR_j=0$ for all $j>0$, $\cR^0_0=\cO_X$ and $\H^0(\cR)_0 = \cO_X$. If $\cM$ is a coherent module over $\cR$ and $\H(\cM)$ is coherent when considered as a module over $\cO_X$, then as an object in $\Dbgr(X,\cR)$, $\cM$ is supported on $X$, i.e. it is locally in $\left< \cO_X(i) \right>_{i\in \ZZ}$.
  \label{regradedlemma}
\end{lemma}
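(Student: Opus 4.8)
The plan is to reduce to the affine case and then induct on the cohomological amplitude of $\H(\cM)$, peeling off one cohomology sheaf at a time. Since the conclusion is local, I would first cover $X$ by affine opens $U$ on which the coordinate ring is regular of finite Krull dimension; because the formation of $\left< \cO_X(i) \right>_{i\in\ZZ}$ and the canonical truncation both commute with the flat restriction $i^*$, it suffices to prove $i^*\cM \in \left< \cO_U(i) \right>_{i\in\ZZ}$ on each such $U$. Note that since $\H(\cM)$ is coherent over $\cO_X$ and the $\cO_X$-action preserves the bidegree, $\H(\cM)$ is nonzero in only finitely many bidegrees $(i,j)$ with each $\H^i_j(\cM)$ coherent over $\cO_X$; in particular the cohomology is bounded, so the induction will terminate.

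The base case is when $\H(\cM)$ is concentrated in a single cohomological degree $n$, so that $\cM \iso \cF[-n]$ for a coherent $\cO_X$-module $\cF$ carrying its internal grading. Here I would first observe that the projection $\cR \to \cR^0_0 = \cO_X$ is a morphism of sheaves of dg algebras: the complement of $\cR^0_0$ is an ideal by the non-positivity of both gradings, and the hypothesis $\H^0(\cR)_0 = \cO_X$ forces $\d$ to vanish on $\cR^{-1}_0$, so the projection is a chain map. Thus any coherent $\cO_X$-module, and in particular $\cF$, becomes an $\cR$-module through this augmentation. On an affine regular $U$ the module $\cF$ is a finite direct sum, over internal degrees, of ordinary coherent $\cO_U$-modules, each admitting a finite resolution by free $\cO_U$-modules since $U$ is regular; assembling these resolutions exhibits $\cF$, and hence $\cF[-n]$, as an object of $\left< \cO_X(i) \right>_{i\in\ZZ}$.

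The inductive step rests on the fact that canonical truncation respects the $\cR$-module structure, and this is the point I expect to be the crux. Let $n$ be the top cohomological degree in which $\H(\cM)$ is nonzero; replacing $\cM$ by $\tau^{\le n}\cM$ (a quasi-isomorphism) I may assume $\cM^{>n}=0$. Because every homogeneous element of $\cR$ has non-positive cohomological degree, the $\cR$-action never raises cohomological degree, so the canonical subcomplex $\tau^{\le n-1}\cM$, equal to $\cM$ below degree $n-1$ and to $\ker(\d^{n-1})$ in degree $n-1$, is an $\cR$-submodule: the only delicate check is that $\ker(\d^{n-1})$ is preserved, and for $x \in \ker(\d^{n-1})$ and $r \in \cR^0$ one has $\d(rx) = \d(r)x \pm r\,\d(x) = \d(r)x$ with $\d(r) \in \cR^1 = 0$, while elements of negative degree send $x$ into lower degrees already contained in the truncation. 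The quotient $\cM/\tau^{\le n-1}\cM$ is then quasi-isomorphic to $\H^n(\cM)[-n]$, giving a triangle
\[
  \tau^{\le n-1}\cM \too \cM \too \H^n(\cM)[-n] \too
\]
in $\Dbgr(\cR)$, whose outer terms remain coherent (their cohomologies are coherent over $\cO_X$, hence over $\H(\cR)$). The first term has strictly smaller amplitude and the last is a single cohomology sheaf, so by the inductive hypothesis and the base case both lie locally in $\left< \cO_X(i) \right>_{i\in\ZZ}$; since this subcategory is triangulated, $\cM$ lies there too. The main obstacle is precisely the verification that $\tau^{\le n-1}$ lands again in $\cR$-modules — everything else is standard dévissage — and it is exactly the cohomological non-positivity of $\cR$ that makes this work.
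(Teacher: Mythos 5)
Your dévissage on cohomological degree is sound, and your verification that the canonical truncation $\tau^{\le n-1}\cM$ is an $\cR$-submodule (using $\cR^{>0}=0$ and the Leibniz rule) is correct; it is essentially the same observation the paper relies on when it passes to $\tau_{\geq n}\cM$. The genuine gap is in your base case. When $\H(\cM)$ is concentrated in a single cohomological degree $n$, the $\cR$-module structure on $\cF=\H^n(\cM)$ does \emph{not} in general factor through the augmentation $\cR\to\cO_X$: the hypotheses allow $\cR^0_j\neq 0$ for $j<0$, and these components act on $\cF$ by lowering the internal degree while preserving the cohomological one. This is not a pathology --- in the situation the lemma is applied to, $\cR^0=\cO_X[t]$ with $t$ of internal degree $-1$, and a module such as $\cO_X[t]/(t^2)$ is coherent, concentrated in one cohomological degree, yet carries a nontrivial $t$-action. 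Consequently the decomposition of $\cF$ into its internal-degree pieces is a direct sum of $\cO_X$-modules but \emph{not} of $\cR$-modules, so ``assembling'' the free resolutions of the pieces does not produce an $\cR$-module resolution; as written, your base case fails for the very algebra the lemma is meant for.

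The repair is a further filtration by internal degree, and this is exactly what the paper does: it inducts on the number of pairs $(i,j)$ with $\H^i(\cM)_j\neq 0$, passes to $\tau_{\geq n}\cM$ for $n$ the \emph{lowest} cohomological degree, and inside it isolates $\cF_m$, the piece of $\cF\iso\H^n(\cM)$ of \emph{lowest} internal degree $m$. Because both gradings of $\cR$ are non-positive and $(n,m)$ is extremal in both directions, every $\cR^i_j$ with $(i,j)\neq(0,0)$ acts as zero on $\cF_m$; so $\cF_m$ genuinely is an $\cR$-submodule carrying the augmentation action, and only then does regularity of $X$ give a finite free resolution by copies of $\cO_X(m)$ that is a resolution of $\cR$-modules. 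The cone over $\cF_m\to\tau_{\geq n}\cM$ then has strictly fewer nonzero bidegrees in cohomology, and the induction closes. Your top-down truncation could be retained, but you would still need to run this internal-degree induction on $\H^n(\cM)[-n]$ via successive extensions rather than splitting it as a direct sum.
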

\begin{proof}
  Since the question is local, we can consider $X$ to be affine. If $\H(\cM)$ is coherent when considered as an $\cO_X$-module, then the cohomology of $\cM$ is bounded above and below. Furthermore, each $\H^i(\cM)$ is nonzero in only finitely many internal degrees. The proof is by induction on the number of pairs $\left( i,j \right)$ such that $\H^i(\cM)_j \neq 0$. 
  
  If $\cM$ is acyclic, then it is in $\left< \cO_X(i) \right>_{i\in \ZZ}$. Now assume that the statement of the proposition is true for all $\cM$ with at most $N$ pairs $\left( i,j \right)$ such that $\H^i(\cM)_j \neq 0$. 
  
  Write $\cM$ as the complex:
  \begin{equation*}
    \dots \too \cM^{n-1} \too \cM^n \too \cM^{n+1} \too \cM^{n+2} \too \dots\text{.}
  \end{equation*}

  Let $n$ be the lowest degree such that $\H^n(\cM)\neq 0$. Then $\cM$ is quasi-isomorphic to the complex $\tau_{\geq n}\cM$ of $\cO_X$-modules
 \begin{equation*}
   \dots \too 0 \too \cok\d^{n-1}_\cM \too \cM^{n+1} \too \cM^{n+2} \too \dots\text{.}
 \end{equation*}
By our assumption on $\cR$, this complex is also an $\cR$-module. 

Denote by $\cF$ the kernel of the morphism 
$$\d^n : \cok\d^{n-1}_\cM \too \cM^{n+1}\text{.}$$
$\cF$ is a submodule of $\tau_{\geq n}\cM$ as an $\cR$-module because it is closed under the action of $\cR$ because of our assumption on $\cR$. We have $\cF \iso \H^n(\cM)$ so $\cF$ is coherent as an $\cO_X$-module.

Now let $m\in \ZZ$ be the lowest internal degree for which $\cF_m \iso \H^n(\cM)_m \neq 0$. Since $\cF$ is coherent, $\cF_m$ is also coherent as an $\cO_X$-module. Observe that $\cR^i_j$ acts as zero on $\cF_m$ for all $(i,j)\neq (0,0)$ so $\cF_m$, which is concentrated in degrees $(n,m)$ is also an $\cR$-module.

Since $X$ is regular and affine, we have that there is a finite free resolution of $\cF_m$. So $\cF_m$ is quasi-isomorphic to a complex of $\cO_X$-modules
\begin{equation*}
  0 \too \cO_X^{\oplus r_k}(m) \too \dots \too \cO_X^{\oplus r_2}(m) \too \cO_X^{\oplus r_1}(m) \too 0\text{.}
\end{equation*}
But since $\cR^i_j$ acts as $0$ on $\cF_m$ for all $(i,j) \neq (0,0)$ and $R^0_0=\cO_X$, $\cF_m$ is also quasi-isomorphic to this complex considered as an $\cR$-module with $\cR$ acting trivially except for the $(0,0)$ piece. Therefore $\cF_m$ represents an object in $\left< \cO_X(i) \right>_{i\in \ZZ}$. The cone of the inclusion morphism $\cF_m \to \tau_{\geq n}\cM$ has the same cohomology as $\cM$ except that the piece in degrees $\left( n,m \right)$ is zero, so by the induction assumption, this cone is also in $\left< \cO_X(i) \right>_{i\in \ZZ}$. Thus $\tau_{\geq n}\cM$ and consequently $\cM$ are in $\left< \cO_X(i)\right>_{i\in \ZZ}$. 
\end{proof}

We are going to prove that the same result is true for $\cA = \exterior^\bullet \cE \otimes \cO_X[t]$. To do this, we will regrade $\cA$. Let $\cR$ be the symmetric algebra:
\begin{equation*}
  \cR = \sheafsym(\dots\to 0 \to \cE\dual \xto{-s\dual} t\cO_X \to 0 \to \dots)\text{,}
\end{equation*}
where $\cE\dual$ are in cohomological degree $-1$ and internal degree $-1$  and $t$ is in cohomological degree $0$ and internal degree $-1$ . So $\cR$ is the symmetric algebra of the same complex of vector bundles as the one for $\cA$, except that the complex is shifted twice to the left. 
We define a functor 
$$\mu : \Dgr(\cA) \to \Dgr(\cR)\text{,}$$
by $\mu(\cM)^i_j = \cM^{i-2j}_j$. It is clear that this functor, and its obvious inverse respect cones and shifts. So $\mu$ an equivalence of triangulated categories. It is also clear that the functors restrict to the subcategories $\Dbgr(\cA)$ and $\Dbgr(\cR)$. The functor $\mu$ does not respect internal degree shifts, but $\mu(\cO_X(k))=\cO_X[-2k](k)$  so we still have that $\mu$ takes the subcategory $\left< \cO_X(i) \right>_{i\in \ZZ}$ in $\Dbgr(\cA)$ to the subcategory  $\left< \cO_X(i) \right>_{i\in \ZZ}$ in $\Dbgr(\cR)$. The inverse functor also does the same in the other direction. Thus, we can apply Lemma \ref{regradedlemma} to conclude the following lemma. 

\begin{lemma}
  If a module $\cM$ in $\Dbgr(\cA)$ is has its cohomology $\H(\cA)$ coherent when considered as an $\cO_X$-module, then it is supported on $X$. Thus, the full subcategory in $\Dbgr(\cA)$ consisting of objects which have cohomology coherent over $\cO_X$ is equal to the subcategory $\Db_X(\cA)$ of modules supported on $X$.
  \label{supportlemma}
\end{lemma}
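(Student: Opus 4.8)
The plan is to reduce the statement to Lemma \ref{regradedlemma} via the regrading equivalence $\mu$ constructed just above, so that almost all of the real content is already in place. The first assertion is that a module $\cM$ in $\Dbgr(\cA)$ whose cohomology $\H(\cM)$ is coherent over $\cO_X$ is supported on $X$. First I would check that the regraded algebra $\cR$ satisfies the four hypotheses of Lemma \ref{regradedlemma}: since $\cE\dual$ sits in cohomological degree $-1$ and $t$ in cohomological degree $0$, the symmetric algebra $\cR$ has $\cR^i=0$ for $i>0$; since both generators have internal degree $-1$ we get $\cR_j=0$ for $j>0$; the bidegree $(0,0)$ piece is exactly $\cO_X$, giving $\cR^0_0=\cO_X$; and since the only part of $\cR$ in internal degree $0$ is this corner piece (internal degree $-a-b=0$ forces $a=b=0$), it carries the zero differential and hence $\H^0(\cR)_0=\cO_X$ as well.

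Next I would transport $\cM$ to an $\cR$-module. By definition $\mu(\cM)^i_j=\cM^{i-2j}_j$, and the differential of a dg module has cohomological degree $+1$ and internal degree $0$, so within each fixed internal degree the differential is merely shifted; thus $\H^i(\mu(\cM))_j \iso \H^{i-2j}(\cM)_j$ is a regrading of $\H(\cM)$. In particular $\H(\mu(\cM))$ is again coherent over $\cO_X$ and nonzero in only finitely many bidegrees. Lemma \ref{regradedlemma} then applies to $\mu(\cM)$ over $\cR$ and shows that $\mu(\cM)$ is locally in $\left< \cO_X(i) \right>_{i\in \ZZ}$. Because $\mu$ is an equivalence of triangulated categories that commutes with the restriction functors $i^*$ and sends $\left< \cO_X(i) \right>_{i\in \ZZ}$ to $\left< \cO_X(i) \right>_{i\in \ZZ}$ (using $\mu(\cO_X(k))=\cO_X[-2k](k)$), its inverse carries this local membership back to $\Dbgr(\cA)$, so $\cM$ is locally in $\left< \cO_X(i) \right>_{i\in \ZZ}$, i.e. supported on $X$.

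For the stated equality of subcategories I would argue the reverse inclusion separately, which is the easy direction: any object of $\Db_X(\cA)$ is locally built from the $\cO_X(i)$ by shifts, cones and direct summands, and since each $\cO_X(i)$ has cohomology coherent over $\cO_X$ and this property is stable under those operations and is local, every module supported on $X$ has $\cO_X$-coherent cohomology. Combined with the first assertion this gives the claimed equality. I do not expect a genuine obstacle here, since the substantive work was done in Lemma \ref{regradedlemma}; the only points needing care are the verification of the four hypotheses on $\cR$ and the check that $\mu$ preserves both the ``$\cO_X$-coherent cohomology'' condition and local membership in $\left< \cO_X(i) \right>_{i\in \ZZ}$, including its compatibility with $i^*$.
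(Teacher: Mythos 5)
Your proposal is correct and follows exactly the paper's route: the paper proves this lemma precisely by introducing the regraded algebra $\cR$ and the equivalence $\mu$ with $\mu(\cM)^i_j=\cM^{i-2j}_j$, observing that $\mu$ preserves (local) membership in $\left< \cO_X(i)\right>_{i\in\ZZ}$ via $\mu(\cO_X(k))=\cO_X[-2k](k)$, and then invoking Lemma \ref{regradedlemma}. You are somewhat more explicit than the paper in checking the four hypotheses on $\cR$ and in spelling out the easy reverse inclusion, but the argument is the same.
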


In what follows, it will be useful to consider the following diagram, which describes $\cA$ in low degrees
\begin{equation}\tag{$\ast$}\label{adiagram}
  \begin{CD}
    \hspace{0.5cm} t\ex^2\cE\dual \hspace{0.4cm} @.  \hspace{0.5cm}t^2\cO_X  \hspace{0.5cm}     @.  \hspace{0.5cm} \hspace{0.5cm}     @.   \hspace{0.5cm}    \,    \hspace{0.5cm}  @.   \hspace{0.5cm}  \hspace{0.5cm}   @.  \color{gray}{h=4} \\
    @AAA @AAA @. @. @. @. \\
    \hspace{0.5cm}\ex^3\cE\dual  \hspace{0.5cm}  @. \hspace{0.5cm} t\cE\dual  \hspace{0.5cm} @.             @.               @.             @.  \color{gray}{h=3}\\
    @. @AAA @. @. @. @. \\
    @.  \hspace{0.5cm}  \ex^2\cE\dual  \hspace{0.5cm} @.  \hspace{0.5cm}t\cO_X     \hspace{0.5cm}       @.               @.             @. \color{gray}{h=2}\\
    @. @. @AAA @. @. @. \\
    @.               @. \hspace{0.5cm}\cE\dual  \hspace{0.5cm} @.               @.             @.  \color{gray}{h=1}\\
    @. @. @. @. @. @. \\
    @.               @.             @. \,\,\,  \hspace{0.5cm} \cO_X \,\,\,  @.             @. \color{gray}{h=0}\\
    \color{gray}{i=-3}        @.    \color{gray}{i=-2}           @.      \color{gray}{i=-1}      @.  \,\,\,\,\,   \color{gray}{i=0}          @.             @. \\
  \end{CD}
\end{equation}
\vspace{0.3cm}

We are now ready to proceed with the proof of the equivalence 
 \begin{equation*}
   \Dbgr(\cA)/\Db_{X}(\cA) \iso \Dbgr(\cA[t^{-1}])\text{.}
  \end{equation*}

\begin{proof}[Proof of proposition \ref{propontheway}] 
  Consider the inclusion morphism $\phi: \cA \to \cA[t^{-1}]$ and the induced functor: 
\begin{equation*}
  \begin{array}{rcl}
    \varphi_*: \Dgr(\cA) & \too & \Dgr(\cA[t^{-1}]) \\
    \cM & \longmapsto & \cA[t^{-1}] \otimes_{\cA} \cM  \text{.}
  \end{array}
\end{equation*}
We first need to see that this functor is well-defined. We do not need to use the derived tensor product because we can identify 
\begin{equation}\label{identification}
  \varphi_* (\cM) = \cA[t^{-1}] \otimes_\cA \cM  \iso \cO_X[t,t^{-1}]\otimes_{\cO_X[t]} \cM \text{,}
\end{equation}
where the differential is $\d(t^k \otimes m) = t^k \otimes \d_\cM(m)$, linear in the first factor. Note that a section 
\begin{equation*}
  \sum_i t^i \otimes m_i
\end{equation*}
of $\cO_X[t,t^{-1}]\otimes_{\cO_X[t]} \cM$  can always be written in the form $t^k \otimes m$ by pulling out $t$'s to equalize the powers of t on the left to the lowest power of $t$ appearing in the sum. To, show that $\varphi_*$, is well-defined, we are going to show that it takes acyclic modules to acyclic modules. For this, we want to see that
\begin{equation}
  \H(\varphi_*\cM) = \H(\cO_X[t,t^{-1}]\otimes_{\cO_X[t]} \cM) \iso \cO_X[t,t^{-1}] \otimes_{\cO_X[t]}\H(\cM) \text{.}
  \label{cohomsame}
\end{equation}
We are going to show this directly. Consider the morphism of $\cO_X$-modules 
\begin{equation*}
   \alpha: \H(\cO_X[t,t^{-1}]\otimes_{\cO_X[t]} \cM) \to  \cO_X[t,t^{-1}] \otimes_{\cO_X[t]}\H(\cM) 
\end{equation*}
that sends a section represented by $t^k \otimes m$ in the kernel of the differential $\d_{\varphi_*\cM}$ to $t^k \otimes m$ where $m$ is considered as a section of the cohomology $\H(\cM)$. We need first to see that $\alpha$ takes a section represented by $t^k \otimes m$ in the kernel of the differential to a section of the form $t^{k'} \otimes m'$ where $m'$ is in the kernel of $\d_\cM$. Indeed, if $t^k \otimes m$ is such a section, then $\d(t^k\otimes m) = t^k \otimes \d_\cM(m) = 0$, which means that there is a $p>0$ such that $t^p \d_\cM(m) = 0$ in $\cM$. But $t^k \otimes m = t^{k-p} \otimes t^p m$ with $\d_\cM(t^p m ) = 0$.
The map $\alpha$ is well-defined since $\d(t^k \otimes m) = t^k \otimes d(m)$ so an element in the image of the differential is taken to zero. It is clear that $\alpha$ has a well-defined inverse $\beta$ which takes a section represented by $t^k\otimes m$ with $m \in \ker\d_\cM$ to $t^k \otimes m$ considered as a section of the kernel of the differential of $\H(\cO_X[t,t^{-1}]\otimes_{\cO_X[t]} \cM)$.

So if $\cM$ has $\H(\cM)=0$, then $\H(\varphi_*\cM)=0$. Therefore the functor $\varphi_*$ is well-defined. 

Consider the functor in the opposite direction:
\begin{equation*}
  \begin{array}{rcl}
     	\varphi^*: \Dgr(\cA[t^{-1}]) & \too & \Dgr(\cA)\\
	\cN & \longmapsto & \cN_{\leq 0} \text{,}
\end{array}
\end{equation*}
where $\cN_{\leq 0}$ denotes the part of $\cN$ with non-positive internal grading. This functor is well-defined as well since it takes acyclic modules to acyclic modules. 

We claim that these functors induce the equivalence of the proposition. First, $\varphi_*$ descends to a functor $\bar{\varphi_*}$ from the quotient $\Dgr(\cA)/\Db_{X}(\cA)$ since if we take an object locally in $\left< \cO_X(i) \right>_{i\in \ZZ}$, its cohomology is $t$-torsion so and by using (\ref{cohomsame}), we can see that its image is 0 under $\bar{\varphi_*}$. Second,  these functors take elements of $\Dbgr(\cA)$ into elements of $\Dbgr(\cA[t^{-1}])$ and vice versa. Third, the composition of these functors is isomorphic to the identity functor; which is what we show next.

Consider the natural transformation
\begin{equation*}
  \operatorname{Id} \too \varphi^* \circ \bar\varphi_*
\end{equation*}
which is given by the the morphisms 
\begin{equation*}
  \cM \too (\cA[t,t^{-1}] \otimes_\cA \cM)_{\leq 0} \iso ( \cO_X[t,t^{-1}] \otimes_{\cO_X[t]}  \cM)_{\leq 0}
\end{equation*}
given for each $\cM \in \Dbgr(A)$
 by taking sections $m$ to $ 1\otimes m$ if their internal degrees are non-positive and to 0 if their internal degrees are positive. Let $\cJ$ be the cone of this morphism. We have the long exact sequence of sheaves of $\cO_X$-modules in cohomology:
\begin{equation*}
  \dots\too \H^{i}(\cM) \too \H^i(\varphi^* \circ \bar\varphi_* \cM) \too \H^{i}(\cJ) \too \H^{i+1}(\cM) \too \H^{i+1}(\varphi^* \circ \bar\varphi_* \cM) \too \dots \text{.}
\end{equation*}
So we have the short exact sequence
\begin{equation*}
  0 \too \cok(\alpha_i) \too \H^i(\cJ) \too \ker(\alpha_{i+1})\too 0\text{,}
\end{equation*}
where 
\begin{equation*}
  \alpha_i: \H^i(\cM) \to \H^i(( \cO_X[t,t^{-1}] \otimes_{\cO_X[t]}  \cM)_{\leq 0})
\end{equation*}
is the induced map on cohomology. By using (\ref{cohomsame}), the coherence of $\H(\cM)$ over $\H(\cA)$ and the fact that below degree $-r$, all sections of $\H(\cA)$ in internal degree $-j\leq-r$ are of the form $t^j a$ for sections $a$ of internal degree 0, one can show that $\ker(\alpha_{i+1})$ and $\cok(\alpha_i)$ are coherent over $\cO_X$. So by the short exact sequence above, $\H(\cJ)$ is coherent over $\cO_X$. Therefore by Lemma \ref{supportlemma}, the cone $\cJ$ a module supported on $X$. Therefore this natural transformation is an isomorphism of functors.  

On the other hand, consider the natural transformation
\begin{equation*}
  \bar\varphi_* \circ \varphi^* \too \Id
\end{equation*}
given by the morphisms 
\begin{equation*}
  \cA[t^{-1}]\otimes_\cA (\cN)_{\leq 0} \too \cN
\end{equation*}
given for each $\cN \in \Dbgr(\cA[t^{-1}])$, by taking sections $a\otimes n$ to $a n$. For each $\cN$, this morphism is clearly surjective. It is also injective since if we consider a section $t^k \otimes n$ (by the isomorphism (\ref{identification})) whose image is $0$, then $t^k  n = 0$, so $n = t^{-k} t^k  n = 0$. Hence this natural transformation is also an isomorphism; which completes the proof of the equivalence.
\end{proof}

Before, we move to the last steps in the proof of our equivalence,
recall that when $Y$ is expressed as the zero locus of the regular section $s \in \H^0(X,\cE)$, the Koszul resolution of $\cO_Y$ is given by
\begin{equation*}\label{koszulresolution}
  0 \to \exterior^r \cE\dual \to \dots \to \exterior^2 \cE\dual \to \cE\dual \to \cO_X \to \cO_Y \to 0\text{,}
\end{equation*}
where the differential is given by $\d(f) = f(s)$, and is extended by the Leibnitz rule. If we denote this resolution by $\cK$, then we have $\cK^{1}=0$, $\cK^{0} = \cO_X$, $\cK^{-1} = \cE\dual$, $\cK^{-2} = \exterior^2 \cE\dual$ and so on. Here, all components are in internal degree $0$. 

We now consider the structure of $\cA$ in more detail. Observe from the diagram (\ref{adiagram}) on page \pageref{adiagram} or from direct computation that in each internal degree, there is a bluntly truncated and shifted copy of the Koszul resolution of $\cO_Y$. On the other hand, $\cA[t^{-1}]$ has, in each internal degree, a shifted copy of the full Koszul resolution of $\cO_Y$, since we now have the rest of the Koszul grading accompanied by negative powers of $t$. So the cohomology of $\cA[t^{-1}]$ is 
\begin{equation*}
  \H(\cA[t^{-1}]) \iso \cO_Y[t,t^{-1}]\text{.}
\end{equation*}
Since in each internal degree $\cA[t^{-1}]$ is acyclic except at $t^k\cO_X$, the morphism of sheaves that takes $t^k \cO_X$ to $t^k\cO_Y$ by restriction, and everything else to $0$, is a morphism of sheaves of dg algebras. So we have a quasi-isomorphism
\begin{equation*}
  \psi : \cA[t^{-1}] \to \H(\cA[t^{-1}])  \iso \cO_Y[t,t^{-1}]\text{.}
\end{equation*}
By the same regrading trick we used above, we can apply Proposition \ref{transfer}. Thus we have arrived at:

\begin{proposition}
  There is a quasi-isomorphism between $\cA[t^{-1}]$ and its cohomology algebra
     $ \H(\cA[t^{-1}])\iso \cO_Y[t,t^{-1}]$. This quasi-isomorphism induces an equivalence
  \begin{equation*}
    \Dbgr(\cA[t^{-1}]) \iso \Dbgr(\cO_Y[t,t^{-1}])  \text{.}
  \end{equation*}
\end{proposition}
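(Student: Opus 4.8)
The plan is to exhibit $\cA[t^{-1}]$ as a dg resolution of $\cO_Y[t,t^{-1}]$ and then invoke Proposition \ref{transfer}. First I would compute $\H(\cA[t^{-1}])$ one internal degree at a time. Since the differential $\d_\cA f = t f(s)$ raises cohomological degree by one and preserves internal degree, fixing internal degree $-n$ isolates a complex whose terms are $t^{k}\ex^{p}\cE\dual$ with $k+p=n$; because $\cA[t^{-1}]$ contains all powers of $t$, here $p$ runs over the full range $0 \le p \le r$, so the complex is a power-of-$t$ shift of the \emph{entire} Koszul complex $\ex^{r}\cE\dual \to \cdots \to \cE\dual \to \cO_X$. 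As this computes $\cO_Y$ concentrated at the $t^{n}\cO_X$ spot, I would conclude $\H(\cA[t^{-1}]) \iso \cO_Y[t,t^{-1}]$, with $t^{n}$ in internal degree $-n$ and cohomological degree $2n$.

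Next I would produce an explicit quasi-isomorphism $\psi \colon \cA[t^{-1}] \to \cO_Y[t,t^{-1}]$ of sheaves of graded dg algebras. The natural candidate sends each $t^{k}\cO_X$ to $t^{k}\cO_Y$ by restriction and annihilates every term of positive exterior degree; that $\psi$ induces an isomorphism on cohomology is immediate from the first step. The two things to verify are multiplicativity and compatibility with the differentials. Multiplicativity holds because the exterior product of two terms of positive exterior degree again has positive exterior degree and is therefore killed, while on the $\cO_X[t,t^{-1}]$ part $\psi$ is just the restriction map $\cO_X[t,t^{-1}] \to \cO_Y[t,t^{-1}]$. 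Since the target carries the zero differential, compatibility reduces to $\psi \circ \d_\cA = 0$; the only differentials landing in an $\cO_X$-term come from $t^{k-1}\cE\dual$ and have the form $t^{k}\alpha(s)$ for a local section $\alpha$ of $\cE\dual$, and $\alpha(s)$ lies in the ideal of $Y$, hence maps to $0$ in $\cO_Y$. This last point is exactly where the hypothesis that $Y$ is the zero scheme of $s$ enters.

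Finally I would promote $\psi$ to the asserted derived equivalence via Proposition \ref{transfer}. The obstacle is that that proposition requires cohomologically non-positively graded algebras, whereas $\cA[t^{-1}]$ sits in positive cohomological degrees. I would resolve this exactly as in the passage from $\cA$ to $\cR$, applying the regrading functor $\mu$ with $\mu(\cM)^{i}_{j} = \cM^{i-2j}_{j}$: under $\mu$ the generators $\cE\dual$ and $t$ move to cohomological degrees $-1$ and $0$, so the regraded algebra is non-positively graded, and each cohomology-supporting term $t^{k}\cO_X$ lands in cohomological degree $0$; the target $\cO_Y[t,t^{-1}]$ likewise regrades into cohomological degree $0$. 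Since $\mu$ is an equivalence of triangulated categories restricting to the bounded derived categories, and $\psi$ regrades to a quasi-isomorphism of the now non-positively graded algebras, Proposition \ref{transfer} yields an equivalence of the corresponding $\Dbgr$; composing with the regrading equivalences on each side gives $\Dbgr(\cA[t^{-1}]) \iso \Dbgr(\cO_Y[t,t^{-1}])$. The step needing the most care is the verification that $\psi$ is a genuine dg-algebra quasi-isomorphism, and in particular the differential-compatibility noted above.
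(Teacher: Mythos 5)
Your proposal is correct and follows the paper's own argument essentially verbatim: identify a full (shifted) Koszul resolution of $\cO_Y$ in each internal degree of $\cA[t^{-1}]$, define $\psi$ as restriction on the $t^k\cO_X$ terms and zero elsewhere, and then apply Proposition \ref{transfer} after the regrading trick. The only difference is that you spell out the multiplicativity and differential-compatibility checks for $\psi$ and the degree bookkeeping for $\mu$, which the paper leaves implicit.
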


Since the category of graded modules over $\cO_Y[t,t^{-1}]$ is equivalent to the category of $k^\times$-equivariant modules over $Y \times (\mathbb{A}^1 \backslash \{0\})$, we have
  \begin{equation*}
    \Dbgr(\cO_Y[t,t^{-1}]) \iso \Db(\cO_Y)\text{.}
  \end{equation*}

Combining our results above gives the following chain of equivalences:
\begin{equation*}
  \Dsg^{k^\times}(Z) \iso \frac{\Dbgr(\cB)}{\t{Perf}{\cB}} \iso \frac{\cDbgr(\cA)^\t{op}}{\Db_{X}(\cA)^\t{op}} \iso \cDbgr(\cA[t^{-1}])^\t{op} \iso \Dbgr(\cO_Y[t,t^{-1}])^\t{op} \iso \Db(Y)^\t{op} \iso \Db(Y),
\end{equation*}
where the last equivalence is given by the functor $\sheafrhom(\bullet, \cO_Y)$, which is an equivalence of the bounded derived category of $Y$ and its opposite category because $Y$ is a local complete intersection in the regular variety $X$ and is therefore Gorenstein. 
This gives us our main theorem

\begin{theorem}\label{maintheorem}
  There is a an equivalence of triangulated categories:
  \begin{equation*}
    \Dsg^{k^\times}(Z) = \Db(Y)\text{.}
  \end{equation*}
  \label{finalequiv}
\end{theorem}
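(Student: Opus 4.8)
The plan is to assemble the chain of equivalences established in the preceding propositions, carefully tracking the contravariance introduced by Koszul duality and cancelling it at the very end using the Gorenstein property of $Y$. The genuine mathematical content sits in Propositions \ref{quotientequiv} and \ref{propontheway} (together with the imported Theorem \ref{koszulequivalence}); the proof of the main theorem itself is the orchestration of these into a single string. First I would record the starting identification $\Dsg^{k^\times}(Z) \iso \Dsggr(\pi_*\cO_Z)$ coming from the grading induced by the $k^\times$-action, and then use the explicit resolution $\cB = \sheafsym\cE \oplus \varepsilon\sheafsym\cE$ with $\d_\cB\varepsilon = s$ together with Proposition \ref{transfer} to replace $\pi_*\cO_Z$ by $\cB$. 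Since $\pi_*\cO_Z$ is quasi-isomorphic to $\cB$ as a $\cB$-module, the equivalence $\Dbgr(\cB) \iso \Dbgr(\pi_*\cO_Z)$ carries $\t{Perf}\cB$ to $\t{Perf}(\pi_*\cO_Z)$, so it descends to the quotients and gives $\Dsggr(\cB) \iso \Dsg^{k^\times}(Z)$.

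Next I would apply linear Koszul duality (Theorem \ref{koszulequivalence}) to pass from $\cB$ to its quadratic dual $\cA = \exterior^\bullet\cE \otimes \cO_X[t]$, yielding the contravariant equivalence $\Dbgr(\cB) \iso \Dbgr(\cA)\op$. The crucial compatibility is Proposition \ref{quotientequiv}: the functor $F$ sends perfect modules to modules supported on $X$ and $G$ does the reverse, so the duality descends to $\Dsggr(\cB) \iso \Dbgr(\cA)\op/\Db_{X}(\cA)\op$. I would then invoke Proposition \ref{propontheway} to identify the quotient by modules supported on $X$ with the effect of inverting $t$, namely $\Dbgr(\cA)/\Db_{X}(\cA) \iso \Dbgr(\cA[t^{-1}])$, and hence the same after passing to opposite categories.

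I would then use the quasi-isomorphism $\psi : \cA[t^{-1}] \to \H(\cA[t^{-1}]) \iso \cO_Y[t,t^{-1}]$ — which holds because in each internal degree $\cA[t^{-1}]$ is a shifted copy of the full Koszul resolution of $\cO_Y$, whereas $\cA$ itself only carries a bluntly truncated copy — and apply Proposition \ref{transfer} via the regrading trick to obtain $\Dbgr(\cA[t^{-1}]) \iso \Dbgr(\cO_Y[t,t^{-1}])$. The equivalence between graded $\cO_Y[t,t^{-1}]$-modules and $k^\times$-equivariant coherent sheaves on $Y \times (\AA^1 \setminus \{0\})$ then gives $\Dbgr(\cO_Y[t,t^{-1}]) \iso \Db(Y)$, so after composing everything I arrive at $\Dsg^{k^\times}(Z) \iso \Db(Y)\op$. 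Finally I would remove the opposite by applying the duality functor $\sheafrhom(\bullet, \cO_Y) : \Db(Y)\op \iso \Db(Y)$, which is an equivalence because $Y$, being a local complete intersection in the regular variety $X$, is Gorenstein.

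The main obstacle in this assembly is the bookkeeping of contravariance: linear Koszul duality is contravariant, so one must carry the opposite category faithfully through both the Verdier quotient and the localization steps, verifying that the image of $\t{Perf}\cB$ under $F$ is exactly $\Db_{X}(\cA)$ (so that the induced functor on quotients is well defined in the op-setting) and that the subsequent equivalences respect this subcategory, and only then cancel the accumulated op against the Gorenstein duality at the very end. Once the op is tracked correctly through each arrow, the chain closes and yields the stated equivalence $\Dsg^{k^\times}(Z) = \Db(Y)$.
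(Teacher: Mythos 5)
Your proposal is correct and follows essentially the same route as the paper: the identical chain $\Dsg^{k^\times}(Z) \iso \Dsggr(\cB) \iso \Dbgr(\cA)\op/\Db_X(\cA)\op \iso \Dbgr(\cA[t^{-1}])\op \iso \Dbgr(\cO_Y[t,t^{-1}])\op \iso \Db(Y)\op \iso \Db(Y)$, with the final opposite cancelled by $\sheafrhom(\bullet,\cO_Y)$ using that $Y$ is Gorenstein. Your attention to tracking the contravariance through the Verdier quotients matches the paper's own bookkeeping.
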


\begin{remark}
  When $s$ is not a regular section, we get the same chain of equivalences except that the equivalence $\Dbgr(\cO_Y[t,t^{-1}]) \iso \Db(Y)$ does not hold since the Koszul complex $\cK$ above is not a resolution anymore. So in this case we get an equivalence between $\Dsg^{k^\times}(Z)$ and $\Db(\cK)$. 
\end{remark}

The results of V. Lunts and D. Orlov in \cite{uniquenessofenhancements} give that two dg categories which have their homotopy categories equivalent to the same bounded derived category of coherent sheaves on a variety with enough locally free sheaves are quasi-equivalent --- by which we mean that they are equivalent in $\t{Ho}(\t{dg-Cat})$, the localization of the category of dg categories by quasi-equivalences. Theorem \ref{finalequiv} thus gives
\begin{corollary}
  The dg categories $\cD^{{b}}(Y)$ and $\cD^{k^\times}_{{sg}}(Z)$ are quasi-equivalent.
\end{corollary}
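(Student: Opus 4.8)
The plan is to derive the corollary from Theorem \ref{finalequiv} by feeding both dg categories into the Lunts--Orlov uniqueness theorem \cite{uniquenessofenhancements}. The key observation is that each of $\cD^b(Y)$ and $\cD^{k^\times}_{sg}(Z)$ is a dg enhancement of a triangulated category, and that Theorem \ref{finalequiv} identifies these two underlying triangulated categories. First I would record that $\cD^b(Y)$ is a pretriangulated dg category with $\H^0(\cD^b(Y)) \iso \Db(Y)$, and that the dg quotient of \cite{drinfeld}, \cite{kellercyclic}, \cite{toen} makes $\cD^{k^\times}_{sg}(Z)$ pretriangulated with $\H^0(\cD^{k^\times}_{sg}(Z)) \iso \Dsg^{k^\times}(Z)$, as was noted in Section 1.

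Next I would use the main theorem to view both dg categories as enhancements of the \emph{same} triangulated category. Composing the canonical equivalence $\H^0(\cD^{k^\times}_{sg}(Z)) \iso \Dsg^{k^\times}(Z)$ with the equivalence $\Dsg^{k^\times}(Z) \iso \Db(Y)$ of Theorem \ref{finalequiv} exhibits $\cD^{k^\times}_{sg}(Z)$ as a dg enhancement of $\Db(Y)$. Since $\cD^b(Y)$ is tautologically an enhancement of $\Db(Y)$ as well, the two dg categories are now enhancements of one and the same triangulated category $\Db(Y)$.

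Finally I would invoke uniqueness of enhancements. Since $Y$ has enough locally free sheaves (the standing hypothesis in force throughout), the main result of \cite{uniquenessofenhancements} applies to $\Db(Y)$ and guarantees that this triangulated category admits a unique enhancement up to quasi-equivalence. It follows that $\cD^b(Y)$ and $\cD^{k^\times}_{sg}(Z)$, being two such enhancements, are quasi-equivalent, i.e. equivalent in $\t{Ho}(\t{dg-Cat})$.

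I expect the one genuinely non-formal point to be the verification that the hypotheses of \cite{uniquenessofenhancements} are met --- specifically that $Y$ has enough locally free sheaves and that both constructions are honest enhancements whose homotopy categories are computed by taking $\H^0$. The former is part of the standing hypotheses on $Y$; the latter is the standard fact, recorded in Section 1, that the dg quotient computes the Verdier quotient on homotopy categories. Once these are in place, the corollary is an immediate application of the cited uniqueness theorem, with no serious computational obstacle beyond correctly matching up the enhancement data.
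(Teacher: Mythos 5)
Your proof is correct and follows exactly the paper's own route: both dg categories are enhancements of the same triangulated category $\Db(Y)$ (via Theorem \ref{finalequiv}), and the Lunts--Orlov uniqueness of enhancements for the bounded derived category of a variety with enough locally free sheaves then yields the quasi-equivalence. The paper states this in a single sentence preceding the corollary; your write-up merely spells out the same argument in more detail.
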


\bibliographystyle{amsalpha}
\bibliography{bibliography}

\end{document}